\newtheorem{theorem}{Theorem}[section]
\newtheorem{lemma}[theorem]{Lemma}
\newcommand{\W}{{\mathcal W}}
\theoremstyle{definition}
\newcommand{\p}{{\mathcal P}}
\theoremstyle{remark}
\numberwithin{equation}{section}
\newcommand{\Uref}{U_{\text{ref}}}
\newcommand{\Bac}{{^C}\hspace{-0.03in}\partial_t^{\alpha}}
\newcommand{\I}{\mathcal{I}}
\begin{document}

\title[Error analysis for FEM  for fractional diffusion equations]{ FEM  for time-fractional diffusion equations, novel optimal error analyses}


\author{Kassem Mustapha}
\address{\thanks{Acknowledgments: The support of  the KFUPM  through the  project No. FT151002
 is gratefully acknowledged.} Department of Mathematics and Statistics, King Fahd University
 of Petroleum and Minerals, Dhahran, 31261, Saudi Arabia.}
\email{kassem@kfupm.edu.sa}

\subjclass[2010]{Primary }

\date{}

\dedicatory{}

\begin{abstract}
   A semidiscrete Galerkin finite element  method applied to time-fractional diffusion equations with time-space  dependent diffusivity on bounded convex spatial  domains will be studied. The main focus is on  achieving optimal error results with respect to both the convergence order of the approximate solution and the regularity of the initial data.   By using novel energy arguments,   for each fixed time  $t$, optimal error bounds  in the spatial  $L^2$- and $H^1$-norms are derived for both cases: smooth and nonsmooth initial data.  Some numerical results will be provided at the end.
\end{abstract}

\maketitle

\section{Introduction}
In this work, we consider the spatial discretisation via Galerkin finite elements of  the following   time-fractional diffusion problem:
find $u=u(x,t)$ so that
\begin{subequations}\label{a}
\begin{alignat}{2}\label{a1}
\Bac &u(x,t) -{\rm div}( \kappa_\alpha (x,t)\nabla u(x,t))
=0 &&\quad\mbox{ in }\Omega\times (0,T],
\\  \label{a2}
&u(x,t)= 0 &&\quad\mbox{ on }\partial\Omega\times (0,T],
\\   \label{a3}
&u(x,0)=u_0(x) &&\quad\mbox{ in }\Omega,
\end{alignat}
\end{subequations}
where  $\Omega$ is a bounded, convex polygonal domain in $\mathbb{R}^d$ ($d\ge 1$)   with
boundary $\partial \Omega$, $\kappa_\alpha$ and  $u_0$ are given functions defined on their
respective domains.  Here,  $\Bac$ is the Caputo  time-fractional
derivative  defined by: for $0<\alpha<1$,
\begin{equation} \label{Ba}
\Bac \varphi(t):=\I^{1-\alpha}\varphi'(t):=\int_0^t\omega_{1-\alpha}(t-s)\varphi'(s)\,ds,~~\text{with} ~~
\omega_{1-\alpha}(t):=\frac{t^{-\alpha}}{\Gamma(1-\alpha)},
\end{equation}
where $\varphi'$ denotes the (partial) time derivative of
$\varphi$ and for $\nu>0$,  $\I^\nu$ is the Riemann--Liouville time-fractional integral operator of order $\nu$ which reduces to the classical definite integral when $\nu$ is a positive integer. The  diffusivity coefficient $ \kappa_\alpha $ satisfies the positivity property:
\begin{equation}\label{eq: A positive}
0<\kappa_{\min} \le   \kappa_\alpha (x,t) \le  \kappa_{\max}<\infty\quad { \rm for
} ~~(x,t) \in \overline \Omega\times [0,T].\end{equation}
 Numerical solutions for time fractional diffusion problem \eqref{a} with  constant or space-dependent diffusion parameter  $ \kappa_\alpha $  have been studied by various authors over the last decade. For  finite difference (including alternating direction implicit schemes) and finite element (conforming and nonconforming)  schemes, we refer to  \cite{BrunnerYamamoto2010, ChenLiuLiuChenAnhTurnerBurrage2014, Cui2012, JLZ2013, JLPZ2015, LinXu2007, MurilloYuste2013, StynesORiordanGracia2017, ZengLiLiuTurner2013, ZhangLiao2014, ZhaoXu2014, Zhaoetal2016} and related references therein. Discontinuous Galerkin (DG) methods  (including local DG and hybridizable DG schemes) were investigated in  \cite{MustaphaNourCockburn2016, MustaphaAbdallahFurati2014, XuZheng2013}, and in  \cite{LiXu2009,ZhaoSun2011} the spectral method was studied. The convergence  analyses  in most of these studies required the solution $u$ of problem \eqref{a}  to be sufficiently regular
  including at $t = 0$ which is not practically the case.

   Having time dependent  variable diffusivity $ \kappa_\alpha $ in  the fractional diffusion problem \eqref{a} is indeed very interesting and also practically important. The numerical solutions  of \eqref{a} were  considered by a few authors only.  For {\em  one-dimensional} spatial  domain $\Omega$, a finite difference scheme was proposed and analyzed by Alikhanov \cite{Alikhanov2015}.  In the error analysis, the continuous solution  $u$ was assumed to be smooth including at $t=0$.  In  \cite{MustaphaAbdallahFuratiNour2016},  a piecewise linear time-stepping DG method  combined with the standard Galerkin finite element  scheme   in space was investigated. The convergence of the scheme had been proven assuming that  $u$ is sufficiently regular. Consequently, the convergence results in these papers are not valid if the initial data $u_0$ is not sufficiently regular where some compatibility conditions are aslo required.

   For constant diffusivity $ \kappa_\alpha $, Jin et al.  \cite{JLZ2013}  studied the error analysis of the spatial semidiscrete piecewise linear Galerkin  finite element  scheme for problem \eqref{a}.  Over a quasi-uniform spatial mesh,  quasi-optimal convergence order results (but optimal with respect to the regularity of  the initial data $u_0$) were proved. The used error analysis (based on semigroup) approach can be extended for the case of space dependent parameter $ \kappa_\alpha $, however is not feasible when $\kappa_\alpha$ is a time or  a time-space dependent function.   Therefore, the optimality of the finite element error estimates with respect to the convergence order and to the solution smoothness expressed through the problem data $u_0$ is indeed missing,  even for constant $ \kappa_\alpha $. So, obtaining optimal finite element  error bounds for the case of  time-space dependent diffusivity $ \kappa_\alpha $ is definitely challenging.

      The aim of this work is to  show  optimal error estimates with respect to both the convergence order and
the regularity of the initial data $u_0$  of the semidiscrete  Galerkin method  for problem   \eqref{a} allowing both smooth and  nonsmooth $u_0$. For each $t \in (0,T]$, by using a  novel energy arguments approach, we show optimal convergence results in the spatial $L^2(\Omega)$- and $H^1(\Omega)$-norms over a (conforming) regular triangulation  mesh (need not be quasi-uniform). It is straight forward to extend our error analysis approach to allow for an inhomogenous source term or homogenous  Neumann boundary conditions in problem \eqref{a}.

Note, for time independent diffusivity $ \kappa_\alpha $, problem \eqref{a} can be rewritten as:
\begin{equation}
\label{eq: reimann} u'(x,t) - {^{R}{\rm D}}^{1-\alpha} {\rm div}( \kappa_\alpha (x)\nabla u(x,t))   = 0 \quad\mbox{ for } (x,t)\in \Omega\times (0,T],
\end{equation}
where  $^{R}{\rm D}^{1-\alpha} u:=\frac{\partial }{\partial t}( \I^\alpha u)$ is the Riemann--Liouville fractional derivative. Recently, Karaa et al. \cite{KaraaMustaphaPani2017} investigated  the error analysis of the Galerkin finite element  scheme applied to problem \eqref{eq: reimann}.
Using a delicate  energy argument,  optimal error bounds in $L^2(\Omega)$- and $H^1(\Omega)$-norms, and quasi-optimal in  $L^{\infty}(\Omega)$-norm
were  derived  for cases  of  smooth and nonsmooth initial data. Unfortunately, extending the considered approach for the case of time dependent diffusivity is not feasible.

 Outline of the paper. In Section \ref{sec:WRT}, the required regularity assumptions on the solution $u$ of problem \eqref{a} will be given. We also state and derive some technical results that will be used in our error analysis. In Section \ref{sec:Semi-discrete FE}, we introduce our semidiscrete Galerkin scheme for problem \eqref{a}  and recall some  error projection results from the existing literature.  In Section \ref{sec: LinftyL2}, optimal error estimates (with respect to both the convergence order and the regularity of $u_0$)  in the $L^2(\Omega)$-norm will be proved using  novel energy arguments, see Theorem \ref{thm: smooth and nonsmoothv2}. For $t\in (0,T]$ and when $u_0 \in \dot H^\delta(\Omega)$ (this Sobolev space will be defined in the next section),  an  $O(t^{-\alpha(2-\delta)/2}h^2)$ error estimate   is proved for $0\le \delta\le 2$ (that is, allowing both smooth and nonsmooth initial data),  $h$
denoting the maximum diameter of the spatial mesh elements. Furthermore, in the $H^1(\Omega)$-norm,  we show an optimal error bounded by $C h\, t^{\alpha(\delta-2)/2}\|u_0\|_\delta$ for  $0\le \delta \le 1$, and by
$C h\,\max\{h\, t^{-\alpha/2},1\} t^{\alpha(\delta-2)/2}\|u_0\|_\delta$ for  $1< \delta \le 2$, where  $C$ is a generic constant that may depend on $\alpha$, $T$,  and the
norms of $\kappa_\alpha$, $\kappa_\alpha'$ and $\kappa_\alpha''$, but is independent of the spatial mesh size element $h$.    The derived  optimal bounds in both $L^2(\Omega)$- and $H^1(\Omega)$-norms  provide  remarkable improvements of results obtained  by Jin {\it et al.} in \cite[Theorem 3.7]{JLZ2013}. Therein,  for a quasi-uniform mesh and assuming that the  parameter $ \kappa_\alpha $ is constant, an  $O(t^{-\alpha(2-\delta)/2}h^{2-m}|\log h|)$ error bound was derived in the $H^m(\Omega)$-norm ($m=0,1$) when $u_0 \in \dot H^\delta(\Omega)$  with $\delta=0,\,1,\,2.$  Finally, for the numerical illustration of these achievements for the case of constant and space-dependent diffusivity coefficient $\kappa_\alpha$,  various tests  were carried out in \cite{JLZ2013}. For the case of space-time dependent $\kappa_\alpha$,  one numerical example will be provided in Section~\ref{sec: numerical}.

\section {Regularity and technical results}\label{sec:WRT}

It is known that the solution $u$ of problem \eqref{a}    has singularity near $t=0$, even for smooth given data. In our error analysis, we assume that  for  $0\le p\le q \le 2$,
\begin{equation}\label{eq: regularity property}
\begin{aligned}
\|u(t)\|_q+t\|u'(t)\|_q&&\le  Ct^{\alpha(p-q)/2}\|u_0\|_p,
\end{aligned}
\end{equation}
 where    $\|\cdot\|_r$ denotes the norm on the Hilbert space $\dot H^r(\Omega)\subset L^2(\Omega)$ defined by
 \[\|v\|_r^2 =\|{{\mathcal L}}^{r/2}v\|^2 =\sum_{j=1}^\infty \lambda_j^r (v,\phi_j)^2,\quad {\rm with}~~~{{\mathcal L}}v:=-{\rm div}( \kappa_\alpha \nabla v),\]
 where $\{\lambda_j\}_{j=1}^\infty$ (with  $0<\lambda_1\le  \lambda_2\le  \ldots$) are the eigenvalues of the operator ${{\mathcal L}}$ (subject homogeneous Dirichlet boundary conditions)
and $\{\phi_j\}_{j=1}^\infty$ are the associated orthonormal eigenfunctions.
In the above definition, $(\cdot,\cdot)$  denotes the $L^2(\Omega)$-norm and $\|\cdot\|:=\|\cdot\|_0$ is the associated norm.  Note, $\dot H^r(\Omega)=H^r(\Omega)$ for $0\le r<1/2,$ however, for  a convex polygonal domain $\Omega$, $\dot H^r(\Omega)=\{w \in H^r(\Omega): w=0~{\rm on}~\partial \Omega\}$ when   $1/2<r \le 2,$ where $H^r(\Omega)$ (with $H^0(\Omega)=L^2(\Omega)$) is the standard Sobolev space.

Indeed, for time independent function $ \kappa_\alpha $, the above regularity assumption holds assuming that the domain $\Omega$ is convex, see  Theorems 4.1 and 4.2 in \cite{Mclean2010}. We conjecture that the same is true for a sufficiently regular time dependent  $ \kappa_\alpha $.

Next, we state some  properties of the fractional integral  operators  $\I^{\alpha}$, and derive some technical results that will be used later. By \cite[Lemma 3.1(ii)]{MustaphaSchoetzau2014},   it follows that for piecewise time continuous functions  $\varphi:[0,T] \to L^2(\Omega),$
\begin{equation}\label{eq: positive of Ia}
\int_0^T(\I^\alpha\varphi,\varphi)\,dt\ge \cos(\alpha \pi/2)\int_0^T\|\I^{\alpha/2}\varphi\|^2\,dt \ge  0~~{\rm  for}~~0<\alpha<1\,.
\end{equation}

Furthermore, by \cite[Lemma 3.1(iii)]{MustaphaSchoetzau2014} and the inequality $\cos(\alpha \pi/2)\ge 1-\alpha$, we obtain the following continuity property of $\I^\alpha$: for  $\varphi\,,\psi \in L^2((0,T);L^2(\Omega)),$
\begin{equation}\label{eq: continuity}
\int_0^t (\I^{ 1-\alpha}\varphi,\psi)ds\le \epsilon  \int_0^t (\I^{1-\alpha}\varphi,\varphi)ds+\frac{1}{4\epsilon\,\alpha^2} \int_0^t (\I^{1-\alpha}\psi,\psi)ds,\quad {\rm for}~~\epsilon >0.
\end{equation}

In our convergence analysis,  we also make use of the inequality below, where the proof follows from   \cite[Lemma 2.1]{LeMcLeanMustapha2016} and the positivity property of $\I^{1-\alpha}$ (by \eqref{eq: positive of Ia}). If  $\varphi':[0,T] \to L^2(\Omega)$ is a piecewise time continuous function, then we have
\begin{equation}\label{lmm}
\|\varphi(t)\|^2\le C t^\alpha \int_0^t (\I^{1-\alpha}\varphi',\varphi')\,ds,\quad {\rm for}~~~t>0\,.
\end{equation}

 Based on the generalized  Leibniz formula and the relation between Riemann--Liouville and Caputo  fractional derivatives, we show the identity in the next lemma.  For convenience, we use the  notations:
\[v_i(t):=t^i v(t),\quad  {\rm for}~~i=1,\,2.\]
\begin{lemma}\label{Ia}
Let   $0<\alpha<1$. The following holds: for $0\le t \le T$,
\[t^2\I^\alpha v'(t) = \I^\alpha v_2'(t)+2(\alpha-1) \I^\alpha v_1(t)   +\alpha(\alpha-1)\I^{1+\alpha}v(t)-t^2\omega_\alpha (t)v(0)\,.\]
\end{lemma}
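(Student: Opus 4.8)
The plan is to prove the identity by manipulating the Riemann--Liouville fractional integral applied to the product $t^2 v'(t)$, using the product rule to reduce everything to derivatives of the weighted functions $v_1 = tv$ and $v_2 = t^2 v$. First I would expand the target quantities $v_1'(t) = v(t) + t v'(t)$ and $v_2'(t) = 2t v(t) + t^2 v'(t)$, so that I can solve for $t^2 v'(t)$ and $t v(t)$ in terms of $v_2'$, $v_1$, and $v$. The key algebraic relation is $t^2 v'(t) = v_2'(t) - 2t v(t) = v_2'(t) - 2 v_1(t)$, which relates $t^2 v'$ to $v_2'$ and $v_1$ up to a commutator term coming from the fact that $\I^\alpha$ does not commute with multiplication by $t^2$.

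The heart of the matter is the generalized Leibniz formula for the Riemann--Liouville operator applied to a product with a power of $t$. Concretely, I would use that $\I^\alpha(t^2 \psi(t))$ differs from $t^2 \I^\alpha \psi(t)$ by correction terms involving lower-order fractional integrals of $\psi$ weighted by powers of $t$; applying this with $\psi = v'$ lets me move the factor $t^2$ inside the operator at the cost of terms of the form $t\,\I^{1+\alpha}$ and $\I^{2+\alpha}$ applied to $v'$. I would then integrate these correction terms by parts (equivalently, use the relation $\I^{\beta} v'(t) = {}^R{\rm D}^{1-\beta} v(t)$ together with $\I^{\beta}v'(t) = \frac{d}{dt}\I^\beta v(t) - \omega_\beta(t)v(0)$, i.e.\ $\I^\beta v'(t) = \I^\beta v(t)' $ shifted by the boundary value $v(0)$) to convert integrals of $v'$ into integrals of $v$, which is exactly where the terms $\alpha(\alpha-1)\I^{1+\alpha}v(t)$ and $-t^2 \omega_\alpha(t) v(0)$ are produced. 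The boundary term at $s=0$ carries the $\omega_\alpha(t) v(0)$ contribution, since $\int_0^t \omega_{\alpha}(t-s)\,ds$-type integrations generate the kernel $\omega_{\alpha}$ evaluated at $t$.

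An alternative and perhaps cleaner route is to verify the identity directly at the level of kernels. Writing $t^2 \I^\alpha v'(t) = \int_0^t t^2 \omega_\alpha(t-s) v'(s)\,ds$, I would insert the algebraic identity $t^2 = (t-s)^2 + 2s(t-s) + s^2$ to split the integrand into three pieces. The piece with $s^2$ gives $\int_0^t \omega_\alpha(t-s) s^2 v'(s)\,ds = \I^\alpha(v_2')(t) - (\text{terms from } 2sv + \cdots)$ once I expand $v_2' = s^2 v' + 2 s v$; the piece with $(t-s)^2$ and $2s(t-s)$ produce, after absorbing the factors $(t-s)$ into the kernel via $(t-s)^k \omega_\alpha(t-s) \propto \omega_{\alpha+k}(t-s)$, precisely the $\I^{1+\alpha}v$ and $\I^\alpha v_1$ contributions with the stated constants $2(\alpha-1)$ and $\alpha(\alpha-1)$, the constants arising from the ratios $\Gamma(1-\alpha)/\Gamma(1+k-\alpha)$. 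The boundary term $-t^2 \omega_\alpha(t) v(0)$ again appears when integrating $v'$ by parts against the kernel.

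The main obstacle will be bookkeeping the constants correctly: tracking how each integration by parts shifts the order of the fractional integral (from $\I^\alpha$ to $\I^{1+\alpha}$, etc.), how the weights $t^i$ interact with the convolution kernel, and ensuring the boundary contributions at $s=0$ and $s=t$ are handled consistently so that only the single term $-t^2\omega_\alpha(t)v(0)$ survives. I expect the kernel-splitting approach to make the coefficient computations most transparent, since $\Gamma$-function ratios such as $\alpha(\alpha-1) = \Gamma(1+\alpha)/\Gamma(\alpha-1)$-type identities fall out directly from $(t-s)^k \omega_\alpha(t-s) = \frac{\Gamma(\alpha+k)}{\Gamma(\alpha)}\,\omega_{\alpha+k}(t-s)$; the delicate point is confirming that the cross terms combine to leave no extraneous contributions.
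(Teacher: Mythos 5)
Your proposal is correct, and the kernel-splitting route (your second, preferred one) is genuinely different from the paper's argument. The paper proceeds iteratively at the operator level: it first derives the first-order identity \eqref{A1}, $t\I^\alpha v'(t)=\I^\alpha v_1'(t)+(\alpha-1)\I^\alpha v(t)-t\omega_\alpha(t)v(0)$, from the relation $\I^\alpha v'=(\I^\alpha v)'-\omega_\alpha v(0)$ together with the fractional Leibniz formula, then multiplies by $t$ and applies the commutation identity $t\I^\alpha\phi=\I^\alpha\phi_1+\alpha\I^{1+\alpha}\phi$ (quoted from \cite{KaraaMustaphaPani2016}) twice. Your route instead expands $t^2=(t-s)^2+2s(t-s)+s^2$ inside the convolution and absorbs powers of $(t-s)$ into the kernel via $(t-s)^k\omega_\alpha(t-s)=\frac{\Gamma(\alpha+k)}{\Gamma(\alpha)}\,\omega_{\alpha+k}(t-s)$; this does close correctly: the $s^2$ piece yields $\I^\alpha v_2'-2\I^\alpha v_1$ (since $s^2v'=v_2'-2v_1$), the $2s(t-s)$ piece yields $2\alpha\I^\alpha v_1-2\alpha\I^{1+\alpha}v$ (using $sv'=v_1'-v$ and $\I^{1+\alpha}v_1'=\I^\alpha v_1$, with no boundary term since $v_1(0)=0$), and the $(t-s)^2$ piece yields $\alpha(\alpha+1)\I^{1+\alpha}v-\alpha(\alpha+1)\omega_{2+\alpha}(t)v(0)$, where $\alpha(\alpha+1)\omega_{2+\alpha}(t)=t^2\omega_\alpha(t)$, so summing reproduces exactly the stated identity. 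Two caveats: the coefficients $2(\alpha-1)$ and $\alpha(\alpha-1)$ each arise by combining \emph{two} pieces ($-2+2\alpha$ and $-2\alpha+\alpha(\alpha+1)$ respectively), not from single pieces as your sketch attributes them --- this is precisely the cross-term bookkeeping you flagged, and it does work out --- and your first-stated kernel ratio $\Gamma(1-\alpha)/\Gamma(1+k-\alpha)$ is a slip for $\Gamma(\alpha+k)/\Gamma(\alpha)$, which you write correctly afterwards. As for what each approach buys: your computation is elementary and self-contained, requiring no external lemma, whereas the paper's iterative argument produces the intermediate identity \eqref{A1} as a by-product, which the paper reuses in the proof of Lemma \ref{H1}; if you adopt the kernel route for this lemma, that first-order identity would still have to be derived separately.
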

\begin{proof}
Since $\I^\alpha v'(t) =(\I^\alpha v(t))'-\omega_\alpha(t)v(0)$,  the use of the   fractional Leibniz formula $t(\I^\alpha v(t))'=(\I^\alpha v_1(t))'+(\alpha-1)\I^\alpha v(t)$ (see  \cite{Osler1970}) and the equality $(\I^\alpha v_1(t))'=\I^\alpha v_1'(t)$ yield the following identity:
\begin{equation}\label{A1}
 t\I^\alpha v'(t) = \I^\alpha v_1'(t)+(\alpha-1)\I^\alpha v(t)-t\omega_\alpha(t)v(0).
 \end{equation}
Now,  multiplying both side of the above identity  by $t$ and applying the identity: $t\I^\alpha \phi(t) = \I^\alpha \phi_1(t)+\alpha\I^{1+\alpha}\phi(t)$ (see \cite[Lemma 4.1 (b)]{KaraaMustaphaPani2017} for the proof) twice,
\begin{multline*}
t^2\I^\alpha v'(t) = t\I^\alpha v_1'(t)+(\alpha-1) t\I^\alpha v(t)-t^2\omega_\alpha(t)v(0)\\
= [\I^\alpha (v_1')_1(t)+\alpha\I^{1+\alpha} v_1'(t)]+(\alpha-1)[\I^\alpha v_1(t)+\alpha\I^{1+\alpha} v(t)]-t^2\omega_\alpha(t)v(0)\,.
\end{multline*}
Since  $ (v_1')_1(t)=t v_1'(t)= v_2'(t)-v_1(t)$ and $\I^{1+\alpha} v_1'(t)=\I^\alpha v_1(t)$, the desired identity follows after simple simplifications.
\end{proof}

\begin{lemma}\label{lemma: tech1}
Let $g\ge 0$ be a   nondecreasing function of $t$.   \\
(i) If
\begin{equation}\label{sup-3-2-1}
 \int_0^t ( \I^{1-\alpha} v,v)\,ds +2\int_0^t (\I( \kappa_\alpha  w),w)\,ds \le g(t),\quad{\rm for}~~t>0,\end{equation}
for suitable functions $v$ and $w$, then for $  \kappa_\alpha ' \in L^\infty((0,T);L^\infty(\Omega))$, we have
\[
\int_0^t ( \I^{1-\alpha} v,v)\,ds +\|\I  w(t)\|^2 \le
Cg(t).\]
(ii) If
\begin{equation}\label{sup-3-2-1-1}
\int_0^t ( \I^{2-\alpha} v,\I v)\,ds +2\int_0^t (\I^2( \kappa_\alpha   w),\I w)\,ds \le g(t)\quad{\rm for}~~t>0,\end{equation}
for suitable functions $v$ and $w$, then for $ \kappa_\alpha',  \kappa_\alpha'' \in L^\infty((0,T);L^\infty(\Omega))$, we have
\[
\int_0^t ( \I^{2-\alpha} v,\I v)\,ds +\|\I^2 w(t)\|^2 \le
Cg(t).\]
\end{lemma}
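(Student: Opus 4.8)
The plan is to handle both parts by a single mechanism, the only change being the order of the integral operator. For part (i), introduce the ordinary antiderivative $W:=\I w$, so that $w=W'$ and $W(0)=0$. Integrating by parts in time gives the Leibniz-type identity
\[
\I(\kappa_\alpha w)=\kappa_\alpha\,\I w-\I(\kappa_\alpha'\,\I w),
\]
which separates the leading term $\kappa_\alpha W$ from a remainder carrying one time derivative of the diffusivity. Substituting this into $2\int_0^t(\I(\kappa_\alpha w),w)\,ds$ and using $2(\kappa_\alpha W,W')=\frac{d}{ds}(\kappa_\alpha W,W)-(\kappa_\alpha'W,W)$, I integrate in $s$ to extract the coercive boundary term $(\kappa_\alpha(t)W(t),W(t))\ge\kappa_{\min}\|\I w(t)\|^2$ together with remainder integrals. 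Combining with the hypothesis \eqref{sup-3-2-1} and the nonnegativity of $\int_0^t(\I^{1-\alpha}v,v)\,ds$ (which follows from \eqref{eq: positive of Ia} applied with $1-\alpha$ in place of $\alpha$), this produces an inequality of the form $\int_0^t(\I^{1-\alpha}v,v)\,ds+\kappa_{\min}\|\I w(t)\|^2\le g(t)+(\text{remainders})$.

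The main obstacle is the remainder $\int_0^t(\I(\kappa_\alpha'W),w)\,ds$, which still carries the derivative $w=W'$. I would integrate by parts once more in $s$, transferring the derivative off $W$ onto the Riemann--Liouville integral. This creates a boundary term $(\I(\kappa_\alpha'W)(t),W(t))$, controlled by Young's inequality as $\tfrac{\kappa_{\min}}{2}\|W(t)\|^2+C\|\I(\kappa_\alpha'W)(t)\|^2$; the first piece is absorbed into the coercive term on the left, while Cauchy--Schwarz in time and $\kappa_\alpha'\in L^\infty$ give $\|\I(\kappa_\alpha'W)(t)\|^2\le C\int_0^t\|W\|^2\,ds$. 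All leftover integrals reduce to the same form $C\int_0^t\|W\|^2\,ds$. A Gronwall argument, using that $g$ is nondecreasing, then yields $\|\I w(t)\|^2\le Cg(t)$; feeding this back bounds $C\int_0^t\|W\|^2\,ds\le Ct\,g(t)\le Cg(t)$, which closes part (i).

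For part (ii) I set $W_2:=\I^2w$, so $W_2'=\I w$, $W_2''=w$, and $W_2(0)=W_2'(0)=0$. Iterating the Leibniz identity twice gives the second-order version
\[
\I^2(\kappa_\alpha w)=\kappa_\alpha\,\I^2 w-2\,\I(\kappa_\alpha'\,\I^2 w)+\I^2(\kappa_\alpha''\,\I^2 w),
\]
whose last term involves $\kappa_\alpha''$ --- this is exactly why the stronger hypothesis $\kappa_\alpha''\in L^\infty$ is imposed. Substituting into $2\int_0^t(\I^2(\kappa_\alpha w),\I w)\,ds$ and again using $2(\kappa_\alpha W_2,W_2')=\frac{d}{ds}(\kappa_\alpha W_2,W_2)-(\kappa_\alpha'W_2,W_2)$ extracts the coercive term $(\kappa_\alpha(t)W_2(t),W_2(t))\ge\kappa_{\min}\|\I^2 w(t)\|^2$, while the nonnegativity of $\int_0^t(\I^{2-\alpha}v,\I v)\,ds=\int_0^t(\I^{1-\alpha}(\I v),\I v)\,ds$ again follows from \eqref{eq: positive of Ia}.

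The two correction terms $\I(\kappa_\alpha'W_2)$ and $\I^2(\kappa_\alpha''W_2)$ are treated exactly as in part (i): integration by parts in $s$ moves the factor $W_2'=\I w$ onto the integral operators, Young's inequality absorbs the resulting boundary contributions $\sim\|W_2(t)\|^2$ into the coercive term, and Cauchy--Schwarz together with $\kappa_\alpha',\kappa_\alpha''\in L^\infty$ reduces every surviving integral to $C\int_0^t\|W_2\|^2\,ds$. A final Gronwall estimate gives $\|\I^2 w(t)\|^2\le Cg(t)$ and then the full bound, as before. The only genuinely new difficulty relative to (i) is the bookkeeping of the three terms of the second-order Leibniz identity and the extra remainder $\I^2(\kappa_\alpha''W_2)$, which is harmless once $\kappa_\alpha''$ is bounded.
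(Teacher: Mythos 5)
Your proposal is correct and follows essentially the same route as the paper's own proof: the identity $\I(\kappa_\alpha w)=\kappa_\alpha\,\I w-\I(\kappa_\alpha'\,\I w)$ (iterated for part (ii) to produce the $\kappa_\alpha''$ term), integration by parts to extract the coercive boundary term $(\kappa_\alpha(t)\I w(t),\I w(t))$, absorption of the remainder boundary term via Cauchy--Schwarz/Young, and a Gronwall argument resting on the nonnegativity of $\int_0^t(\I^{1-\alpha}v,v)\,ds$. The only cosmetic difference is that the paper applies Gronwall directly to the combined quantity rather than bounding $\|\I w(t)\|^2$ first and feeding it back, but this is the same argument.
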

\begin{proof}
Let $w_I(t):=\I w(t)=\int_0^t w(s)\,ds$. Since $\I( \kappa_\alpha w)= \kappa_\alpha \, w_I-\I( \kappa_\alpha 'w_I)$, an integration by parts yields
\[
\begin{aligned}
2\int_0^t &(\I( \kappa_\alpha w),w)\,ds = \int_0^t \big( \kappa_\alpha ,(w_I^2)'\big)\,ds-2\int_0^t \big(\I( \kappa_\alpha 'w_I),w_I'\big)\,ds\\
& = \big( \kappa_\alpha (t),w_I^2(t)\big) - \int_0^t \big( \kappa_\alpha ',w_I^2\big)\,ds-2\big(\I( \kappa_\alpha 'w_I)(t),w_I(t)\big) +2\int_0^t \big( \kappa_\alpha ',w_I^2\big)\,ds\\
& = \big( \kappa_\alpha (t),w_I^2(t)\big) -2\big(\I( \kappa_\alpha 'w_I)(t),w_I(t)\big)+ \int_0^t \big( \kappa_\alpha ',w_I^2\big)\,ds\,.
\end{aligned}
\]
Therefore, by inserting this in \eqref{sup-3-2-1}, then  using the positivity assumption on the diffusion coefficient $ \kappa_\alpha $, \eqref{eq: A positive}, and the Cauchy-Schwarz inequality, we conclude that
\[
\begin{aligned}
\int_0^t ( \I^{1-\alpha}v,v)\,ds +\|w_I(t)\|^2 &\le
C g(t)+C\int_0^t \|w_I\|ds\|w_I(t)\|+ C\int_0^t \|w_I\|^2\,ds\\
 &\le
C g(t)+\frac{1}{2}\|w_I(t)\|^2+ C\int_0^t \|w_I\|^2\,ds.\end{aligned}\]
 Thus,
\[\int_0^t ( \I^{1-\alpha}v,v)\,ds +\|w_I(t)\|^2 \le
Cg(t)+C\int_0^t \|w_I\|^2ds.\]
  Since $\int_0^t ( \I^{1-\alpha}v,v)\,ds\ge 0$ by the positivity property in \eqref{eq: positive of Ia},  an application of the continuous version of Gronwalls inequality yields the first desired result.

To show $(ii)$, we let $w_{II}:=\I^2  v$. Since $\I( \kappa_\alpha  w_{II}'')= \kappa_\alpha  w_{II}' -\I( \kappa_\alpha ' w_{II}')$,
\begin{align*}
\I^2( \kappa_\alpha w_{II}'')(s)&=\I( \kappa_\alpha w_{II}')(s)-\I^2( \kappa_\alpha ' w_{II}')(s)\\
&=  \kappa_\alpha (s)w_{II}(s)-\I( \kappa_\alpha 'w_{II})(s)-\I^2( \kappa_\alpha 'w_{II}')(s)\\
&=  \kappa_\alpha (s)w_{II}(s)-2\I( \kappa_\alpha 'w_{II})(s)+\I^2( \kappa_\alpha ''w_{II})(s)\,.\end{align*}
Thus, an integration by parts yields
\[
\begin{aligned}
2&\int_0^t (\I^2( \kappa_\alpha  w),\I w)\,ds
=2\int_0^t (\I^2( \kappa_\alpha w_{II}''),w_{II}')\,ds \\
&= \int_0^t \Big( \kappa_\alpha ,\big(w_{II}^2\big)'\Big)\,ds-2\int_0^t \big(2\I( \kappa_\alpha 'w_{II})-\I^2( \kappa_\alpha ''w_{II}),w_{II}'\big)\,ds\\
& = \big( \kappa_\alpha (t),w_{II}^2(t)\big) - \int_0^t \big( \kappa_\alpha ',w_{II}^2\big)\,ds\\
&\quad -2\big(2\I( \kappa_\alpha 'w_{II})(t)-\I^2( \kappa_\alpha ''w_{II})(t),w_{II}(t)\big)
 +2\int_0^t \big(2 \kappa_\alpha 'w_{II}-\I( \kappa_\alpha ''w_{II}),w_{II}\big)\,ds\\
& = \big( \kappa_\alpha (t),w_{II}^2(t)\big) + 3\int_0^t \big( \kappa_\alpha ',w_{II}^2\big)\,ds\\
&\quad -2\big(2\I( \kappa_\alpha 'w_{II})(t)-\I^2( \kappa_\alpha ''w_{II})(t),w_{II}(t)\big)
 -2\int_0^t \big(\I( \kappa_\alpha ''w_{II}),w_{II}\big)\,ds\,.
\end{aligned}\]
 Now, by proceeding as in the proof of $(i)$, we obtain the second desired result.
\end{proof}

\section{Finite element discretization} \label{sec:Semi-discrete FE}
This section focuses on the  spatial semidiscrete Galerkin finite element   scheme for the time fractional diffusion   problem \eqref{a}. Let  $\mathcal{T}_h$ be a
family of  shape-regular  triangulations (made of simplexes $K$) of
the  domain $\overline{\Omega}$ and let $h=\max_{K\in \mathcal{T}_h}(\mbox{diam}K),$ where $h_{K}$ denotes
the diameter  of the element  $K.$ Let $S_h \in H^1_0(\Omega)$  denote the
usual space of continuous, piecewise-linear functions on $\mathcal{T}_h$ that  vanish on $\partial \Omega$.

The weak formulation for problem  \eqref{a} is
to find   $u:( 0,T]\longrightarrow H^1_0(\Omega)$ such that
\begin{equation} \label{weak}
(\Bac u,v )+ A(u,v )=  0\quad
\forall v\in H^1_0(\Omega)
\end{equation}
with given  $ u(0)=u_0.$ Here  $A(\cdot, \cdot)$ is the  bilinear form associated with the elliptic operator ${{\mathcal L}}$, i.e., $A(v,w)=( \kappa_\alpha \nabla v, \nabla w)$, which is symmetric positive definite on  the Sobolev space $H_0^1(\Omega)$ for each fixed $t\in [0,T]$.

Now, the semidiscrete  scheme  for \eqref{a} is
to seek  $u_h:(0,T]\longrightarrow S_h$ such that
\begin{equation} \label{semi}
(\Bac u_h,\chi)+ A(u_h,\chi)=  0\quad
\forall \chi\in S_h,
\end{equation}
with given  $u_h(0):=u_{h0}=P_h u_0,$ where $P_h :L^2(\Omega)\rightarrow S_h$ denotes the $L^2$-projection defined by $(P_h v-v, \chi)= 0$ for all $\chi\in S_h.$   Indeed, for initial data $u_0\in {\dot H}^1(\Omega)$, one may choose instead  $u_h(0)=R_hu_0$, where   $R_h : H_0^1(\Omega) \rightarrow S_h $ is the Ritz projection defined by the following relation:
$A(R_h v-v, \chi)= 0$ for all $\chi\in S_h.$

For the  error analysis, we use the following decomposition:
\begin{equation} \label{eq: decomposition} u-u_h=(u- R_h u)-( u_h-R_hu)=:\rho-\theta.\end{equation}
For $t\in (0,T]$, from the projection error  estimates
\cite[(3.2) and (3.3)]{LuskinRannacher1982} and the regularity assumption in \eqref{eq: regularity property}, for $0\le \delta\le m$ with $m=1,2,$
\begin{equation}\label{rho-estimate}
\|\rho(t)\|+h\|\rho(t)\|_1\leq C h^m \|u(t)\|_m\le Ch^m t^{\alpha(\delta-m)/2}\|u_0\|_\delta,\end{equation}
 and
 \begin{equation}\label{rho-estimate1}
 \|\rho'(t)\|+h\|\rho'(t)\|_1\leq C  h^m\Bigl(\|u(t)\|_m+\|u'(t)\|_m\Bigr)\le Ch^m t^{\alpha(\delta-m)/2-1}\|u_0\|_\delta.\end{equation}

   We need to assume that $\kappa_\alpha \in L^\infty((0,T);W^{1,\infty}(\Omega))$ in \eqref{rho-estimate},  and in addition to this, $\kappa_\alpha' \in L^\infty((0,T);W^{1,\infty}(\Omega))$ in \eqref{rho-estimate1}. Noting that, when $m=1$, for the $H^1$-norm projection error  estimates,   $W^{1,\infty}(\Omega)$ can be replaced with $L^\infty(\Omega)$ in these assumptions.

Therefore, for later use, we have
\begin{multline}\label{eq: bound of B1}
\|\I^{1-\alpha}\rho(t)\|+\|\I^{1-\alpha}\rho_1'(t)\|\leq C\int_0^t (t-s)^{-\alpha}\big[\|\rho(s)\|+s\|\rho'(s)\|\big]\,ds\\
\leq Ch^{m}  \int_0^t (t-s)^{-\alpha}  s^{\alpha(\delta-m)/2}\,ds\,\|u_0\|_\delta\\
= Ch^{m}  t^{1-\alpha+\alpha(\delta-m)/2}\|u_0\|_\delta,~~ {\rm for}~ 0\le \delta\le m,~~{\rm with}~~m=1,2\,.
\end{multline}
In a similar fashion,  for $t\in (0,T]$, we have
\begin{multline}\label{eq: bound of B2}
\|\I^{1-\alpha}\rho_2'(t)\|+  \|\I^{1-\alpha}\rho_1(t)\|
\leq C\int_0^t (t-s)^{-\alpha}\big[s\|\rho(s)\|+s^2\|\rho'(s)\|\big]\,ds\\
\leq C\,h^2 \int_0^t  (t-s)^{-\alpha}  s^{1+\alpha(\delta-2)/2} \|u_0\|_\delta\\
\le  C\,h^2  t^{2-\alpha+\alpha(\delta-2)/2} \|u_0\|_\delta,\quad {\rm for}~~ 0\le \delta\le2\,.
\end{multline}

Via an energy argument approach, we estimate $\theta$ (and consequently the finite element  error) in the next section.

\section {Error estimates} \label{sec: LinftyL2}
 This section is devoted  to derive optimal error bounds from the Galerkin  approximation in both $L^2(\Omega)$- and $H^1(\Omega)$-norms, for the case of
 smooth and nonsmooth initial data  $u_0$. More precisely, for $t\in (0,T]$ and for $u_0\in \dot H^\delta(\Omega)$, we show
$$ \|(u-u_h)(t)\| + h\|\nabla(u-u_h)(t)\|   \leq
 C h^2 t^{\alpha(\delta-2)/2}\|u_0\|_\delta\quad{\rm for}~~0\le \delta \le 2,$$
see Theorem \ref{thm: smooth and nonsmoothv2}, and the estimates in \eqref{eq: H1 bound 0 delta 1} and \eqref{eq: H1 bound 1 delta 2}. Noting that, for the $H^1(\Omega)$-norm error,  the spatial mesh is assumed to be quasi-uniform when  $1< \delta \le 2$.

The  estimate of $\theta_1$ in the  next lemma   plays a crucial role in achieving our error bounds.
\begin{lemma} \label{eq: bound of I 1-alpha Theta1}
Assume that $\kappa_\alpha'\,,\kappa_\alpha'' \in L^\infty((0,T);L^\infty(\Omega)).$  For $0\le t\le T$,  we have
\[\int_0^t (\I^{1-\alpha}\theta_1,\theta_1) +\|\I (\nabla \theta_1)(t)\|^2 \le
C  \int_0^t [|(\I^{1-\alpha}\rho_1,\rho_1)|+|(\I^{2-\alpha}\rho,\I\rho)|]ds\,.\]
\end{lemma}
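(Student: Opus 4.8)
The plan is to establish a strengthened analogue of Lemma~\ref{lem: estimate of Theta in l2 norm} at the level of $\theta_1=t\theta$ and then invoke Lemma~\ref{lemma: tech1}(i) with $v=\theta_1$ and $w=\nabla\theta_1$; indeed the two left-hand quantities $\int_0^t(\I^{1-\alpha}\theta_1,\theta_1)\,ds$ and $\|\I(\nabla\theta_1)(t)\|^2$ are precisely the output of that lemma (with $\I w=\I\nabla\theta_1$). First I would produce a Galerkin identity for $\theta_1$ of the same shape as \eqref{sup-3 ph}. Starting from the equation \eqref{eq: Theta1} for $\theta_1'$, I integrate once in time; using $\theta_1(0)=0$ and the relation $\I^{2-\alpha}\phi'=\I^{1-\alpha}\phi-\omega_{2-\alpha}\phi(0)$ this yields
\[(\I^{1-\alpha}\theta_1,\chi)-\alpha(\I^{2-\alpha}\theta,\chi)+(\I(\kappa_\alpha\nabla\theta_1),\nabla\chi)=(\I^{1-\alpha}\rho_1,\chi)-\alpha(\I^{2-\alpha}\rho,\chi)\]
for all $\chi\in S_h$, which is the $\theta_1$-counterpart of \eqref{sup-3 ph} carrying two correction terms in $\I^{2-\alpha}\theta$ and $\I^{2-\alpha}\rho$.

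Next I would take $\chi=\theta_1$ and integrate in time, obtaining the energy identity
\[\int_0^t(\I^{1-\alpha}\theta_1,\theta_1)\,ds+\int_0^t(\I(\kappa_\alpha\nabla\theta_1),\nabla\theta_1)\,ds=\int_0^t(\I^{1-\alpha}\rho_1,\theta_1)\,ds-\alpha\int_0^t(\I^{2-\alpha}\rho,\theta_1)\,ds+\alpha\int_0^t(\I^{2-\alpha}\theta,\theta_1)\,ds.\]
The two data terms on the right are handled by the Cauchy--Schwarz inequality for the positive bilinear form $\langle\varphi,\psi\rangle:=\int_0^t(\I^{1-\alpha}\varphi,\psi)\,ds$ (positivity being \eqref{eq: positive of Ia}). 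For the first, $\langle\rho_1,\theta_1\rangle\le\epsilon\langle\theta_1,\theta_1\rangle+C_\epsilon\langle\rho_1,\rho_1\rangle$, producing the term $\int_0^t(\I^{1-\alpha}\rho_1,\rho_1)\,ds$. For the second I write $\I^{2-\alpha}\rho=\I^{1-\alpha}(\I\rho)$, so that $\int_0^t(\I^{2-\alpha}\rho,\theta_1)\,ds=\langle\I\rho,\theta_1\rangle\le\epsilon\langle\theta_1,\theta_1\rangle+C_\epsilon\langle\I\rho,\I\rho\rangle$, and $\langle\I\rho,\I\rho\rangle=\int_0^t(\I^{2-\alpha}\rho,\I\rho)\,ds$ is exactly the second term of the claimed bound. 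Both $\epsilon\langle\theta_1,\theta_1\rangle$ contributions are absorbed on the left.

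The step I expect to be the main obstacle is the remaining correction $\alpha\int_0^t(\I^{2-\alpha}\theta,\theta_1)\,ds$, which involves the unknown $\theta$ rather than the data. Writing $\I^{2-\alpha}\theta=\I^{1-\alpha}(\I\theta)$ and applying the same Cauchy--Schwarz gives the bound $\epsilon\langle\theta_1,\theta_1\rangle+C_\epsilon\int_0^t(\I^{2-\alpha}\theta,\I\theta)\,ds$, but the factor $\int_0^t(\I^{2-\alpha}\theta,\I\theta)\,ds$, the coercivity of $\I\theta$, is not one of the admissible right-hand side quantities, so it cannot simply be estimated by $\theta$-data (that would reintroduce the forbidden term $\int_0^t(\I^{1-\alpha}\rho,\rho)\,ds$ coming from Lemma~\ref{lem: estimate of Theta in l2 norm}). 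The delicate point is therefore to \emph{absorb} this term into the coercive left-hand side: exploiting the algebraic identities $t\I^{1-\alpha}\theta=\I^{1-\alpha}\theta_1+(1-\alpha)\I^{2-\alpha}\theta$ and $t\I(\kappa_\alpha\nabla\theta)=\I(\kappa_\alpha\nabla\theta_1)+\I^2(\kappa_\alpha\nabla\theta)$ to relate $\I\theta$ to $\theta_1$ and $\nabla\theta_1$, together with tools such as the $L^2(0,t)$-boundedness of $\I^{1-\alpha}$, a Poincar\'e inequality, and \eqref{lmm}, one should bound $\int_0^t(\I^{2-\alpha}\theta,\I\theta)\,ds$ by $\epsilon\int_0^t(\I^{1-\alpha}\theta_1,\theta_1)\,ds$ plus a remainder $C\int_0^t\|\I\nabla\theta_1\|^2\,ds$ of the type that a Gronwall argument can swallow.

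Finally, collecting the absorbed terms and multiplying by $2$ yields $\int_0^t(\I^{1-\alpha}\theta_1,\theta_1)\,ds+2\int_0^t(\I(\kappa_\alpha\nabla\theta_1),\nabla\theta_1)\,ds\le g(t)+C\int_0^t\|\I\nabla\theta_1\|^2\,ds$ with $g(t)=C\int_0^t[\,|(\I^{1-\alpha}\rho_1,\rho_1)|+|(\I^{2-\alpha}\rho,\I\rho)|\,]\,ds$. Running the integration-by-parts-and-Gronwall argument of Lemma~\ref{lemma: tech1}(i) with $v=\theta_1$ and $w=\nabla\theta_1$, and folding the extra remainder $C\int_0^t\|\I\nabla\theta_1\|^2\,ds$ into its Gronwall step, then delivers the stated estimate.
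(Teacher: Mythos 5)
Your setup coincides with the paper's: the once-integrated Galerkin identity for $\theta_1$, the choice $\chi=\theta_1$, the second integration in time, and the treatment of the two data terms $(\I^{1-\alpha}\rho_1,\theta_1)$ and $\alpha(\I^{2-\alpha}\rho,\theta_1)$ are all correct and match the paper's argument. The gap is exactly at the step you yourself flag as the main obstacle: the term $\alpha\int_0^t(\I^{2-\alpha}\theta,\theta_1)\,ds$, which after your Cauchy--Schwarz step leaves the quantity $\int_0^t(\I^{2-\alpha}\theta,\I\theta)\,ds$ to be controlled. Your proposed resolution --- absorbing this quantity into the coercive left-hand side via the algebraic identities, a Poincar\'e inequality, \eqref{lmm}, etc. --- is a hope, not an argument: you never establish the inequality $\int_0^t(\I^{2-\alpha}\theta,\I\theta)\,ds\le\epsilon\int_0^t(\I^{1-\alpha}\theta_1,\theta_1)\,ds+C\int_0^t\|\I\nabla\theta_1\|^2\,ds$, and it is unlikely to be provable by generic means. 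Both sides scale identically in $t$ (for $\|\theta(s)\|\sim s^{-\alpha}$ each behaves like $t^{4-3\alpha}$), so absorption would require a genuinely small constant, and none is available: $\I\theta$ weights the early-time history far more heavily than $\theta_1=s\,\theta$ does --- for $\theta(s)=s^{-\beta}$ one has $\I\theta=(1-\beta)^{-1}\theta_1$, with ratio blowing up as $\beta\to1$, which is precisely the relevant regime $\alpha>2/3$ that this lemma is designed to fix --- and a Poincar\'e step only trades $\I\theta$ for $\I\nabla\theta$, not $\I\nabla\theta_1$, so the same obstruction reappears.

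Moreover, your premise that this term \emph{cannot} be estimated by data without reintroducing the forbidden quantity $\int_0^t(\I^{1-\alpha}\rho,\rho)\,ds$ is wrong, and this is exactly the paper's key idea that your proposal misses. The paper integrates the error equation \eqref{sup-3 ph} once more in time to get \eqref{sup-3 ph-1}, tests with $\chi=\I\theta$, integrates over $(0,t)$, and applies the continuity property \eqref{eq: continuity} with $\epsilon=\frac12$; the data term this produces is $\int_0^t(\I^{1-\alpha}(\I\rho),\I\rho)\,ds=\int_0^t(\I^{2-\alpha}\rho,\I\rho)\,ds$, which is precisely the admissible quantity in the statement. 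Lemma \ref{lemma: tech1}(ii) --- which exists in the paper exactly for this purpose and which your proposal never invokes --- then handles the cross term $\int_0^t(\I^2(\kappa_\alpha\nabla\theta),\I\nabla\theta)\,ds$, yielding \eqref{eq: bound of I 2-alpha}: $\int_0^t(\I^{2-\alpha}\theta,\I\theta)\,ds\le C\int_0^t|(\I^{2-\alpha}\rho,\I\rho)|\,ds$. Substituting this bound into your energy identity and finishing with Lemma \ref{lemma: tech1}(i) completes the proof. In short: your argument is incomplete at its central step, and the mechanism you sketch to close it (absorption) is not viable; the correct mechanism is a second, twice-integrated energy argument bounding the $\theta$-history term directly by admissible data.
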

\begin{proof} From (\ref{weak}) and (\ref{semi}), the error decomposition $u-u_h=\rho-\theta$ in \eqref{eq: decomposition}, and the property of the Ritz projection, we obtain
\begin{equation} \label{sup-2}
(\I^{1-\alpha}\theta',\chi)+A(\theta,\chi)=
(\I^{1-\alpha}\rho',\chi)\quad \forall~\chi \in S_h.
\end{equation}
 Multiplying both sides of \eqref{sup-2} by $t$, gives
\[
(t\I^{1-\alpha}\theta',\chi)+A(\theta_1,\chi)=
(t\I^{1-\alpha}\rho',\chi)\,.
\]
Hence,  by the identity in \eqref{A1} and   the equality   $(u_0-u_{h0},\chi)= 0$,  we obtain
\begin{equation}\label{eq: Theta1}
\begin{aligned}
(\I^{1-\alpha}\theta_1'-\alpha\I^{1-\alpha}\theta,\chi)+A(\theta_1,\chi)&=
(\I^{1-\alpha}\rho_1'-\alpha\I^{1-\alpha}\rho,\chi)\,.
\end{aligned}
\end{equation}
Integrating \eqref{eq: Theta1} in time   and rearranging the terms to get
\[
\begin{aligned}
(\I^{1-\alpha}\theta_1,\chi)+(\I(\kappa_\alpha \nabla \theta_1),\nabla \chi)&=
(\I^{1-\alpha}\rho_1-\alpha\I^{1-\alpha}(\I\rho)+\alpha\I^{1-\alpha}(\I\theta),\chi),
\end{aligned}
\]
for all $\chi \in S_h.$ Choosing $\chi=\theta_1(s)\in S_h$, and then  integrating again in time  and using the continuity property in \eqref{eq: continuity} (with $\epsilon=\frac{1}{4}$) for the three terms on the right-hand side, we observe that
\begin{equation}\label{eq: bound of theta1}
\begin{aligned}
\int_0^t [(\I^{1-\alpha}&\theta_1,\theta_1)+(\I(\kappa_\alpha \nabla \theta_1),\nabla \theta_1)]ds\\
&\quad \le
C  \int_0^t [(\I^{1-\alpha}\rho_1,\rho_1)+(\I^{1-\alpha}(\I\rho),\I\rho)+(\I^{1-\alpha}(\I\theta),\I\theta)]ds\,.
\end{aligned}
\end{equation}
To estimate the last term on the right-hand side of \eqref{eq: bound of theta1}, we first integrate both sides of \eqref{sup-2} in time and use the identity $\I^{2-\alpha} v'(t)=\I^{1-\alpha} v(t)-\omega_{2-\alpha}(t) v(0)$,
\[
(\I^{1-\alpha}\theta,\chi)+(\I( \kappa_\alpha \nabla \theta),\nabla \chi)=
(\I^{1-\alpha}\rho-\omega_{2-\alpha}(t)[u_0-u_{h0}],\chi)\quad \forall~\chi \in S_h.
\]
Since  $(u_0-u_{h0},\chi)=(u_0-P_h u_0,\chi)= 0$,
\begin{equation} \label{sup-3 ph}
(\I^{1-\alpha}\theta,\chi)+(\I( \kappa_\alpha \nabla \theta),\nabla \chi)=
(\I^{1-\alpha}\rho,\chi)\quad \forall~\chi \in S_h.
\end{equation}
Integrating both sides of \eqref{sup-3 ph} yields
\begin{equation} \label{sup-3 ph-1}
(\I^{1-\alpha}(\I\theta),\chi)+(\I^2(\kappa_\alpha\nabla \theta),\nabla \chi)=
(\I^{1-\alpha}(\I\rho),\chi)\quad \forall~\chi \in S_h.
\end{equation}
Setting  $\chi=\I\theta(s) \in S_h$ and then, integrating over the time variable  and applying the continuity property of $\I^{1-\alpha}$ (with $\epsilon=\frac{1}{2}$), we  find that
\begin{multline*}
\int_0^t ( \I^{1-\alpha}(\I\theta),\I\theta)\,ds +\int_0^t (\I^2( \kappa_\alpha \nabla \theta),\I(\nabla \theta))\,ds =
\int_0^t (\I^{1-\alpha}(\I\rho),\I\theta)ds\\
\le \frac{1}{2}\int_0^t ( \I^{1-\alpha}(\I\theta),\I\theta)\,ds+
C  \int_0^t (\I^{1-\alpha}(\I\rho),\I \rho)ds\,.
\end{multline*}
After simplification,   an application of Lemma \ref{lemma: tech1} $(ii)$   gives
\begin{equation} \label{eq: bound of I 2-alpha}
\int_0^t ( \I^{2-\alpha}\theta,\I\theta)\,ds \le
C  \int_0^t |(\I^{2-\alpha}\rho,\I\rho)|ds.\end{equation}
Inserting  this bound in \eqref{eq: bound of theta1} gives
\[\begin{aligned}
\int_0^t [(\I^{1-\alpha}\theta_1,\theta_1)+&(\I( \kappa_\alpha  \nabla \theta_1),\nabla \theta_1)]ds\le
C  \int_0^t \Big[|(\I^{1-\alpha}\rho_1,\rho_1)|+|(\I^{2-\alpha}\rho,\I\rho)|\Big]ds\,.
\end{aligned}\]
Finally,  an application of Lemma \ref{lemma: tech1} $(i)$ yields the desired bound. \end{proof}

 Now, we are ready to derive an estimate of $\theta$ that will be used later to derive optimal finite element error bounds for the case of smooth and  nonsmooth initial data.
\begin{lemma}
 \label{H1-2} Assume that $ \kappa_\alpha ',  \kappa_\alpha '' \in L^\infty((0,T);L^\infty(\Omega))$. For $0\le t\le T$, the following estimate holds
\begin{multline*}
\|\theta(t)\|^2 + t^\alpha \|\nabla \theta(t)\|^2 \\ \le C  t^{\alpha-4} \int_0^t\big(\|\I^{1-\alpha}\rho_2'\|+  \|\I^{1-\alpha}\rho_1\|+  \|\I^{2-\alpha}\rho\|\big)\big(\|\rho_2'\|+\|\rho_1\|+\|\I\rho\|\big)\,ds\,.\end{multline*}
\end{lemma}
\begin{proof} Multiplying both sides of \eqref{sup-2} by $t^2$ gives
\[
(t^2\I^{1-\alpha}\theta',\chi)+A(\theta_2,\chi)=
(t^2\I^{1-\alpha}\rho',\chi)\quad \forall~\chi \in S_h\,.
\]
Using the  identity in  Lemma \ref{Ia} and   the fact that  $(u_0-u_{h0},\chi)= 0$ yields
\begin{multline}\label{eq: Theta1-2}
(\I^{1-\alpha}[\theta_2'-2\alpha \theta_1   -\alpha(1-\alpha)\I\theta],\chi)+A(\theta_2,\chi)=(\I^{1-\alpha}\eta,\chi),\quad  \forall~\chi \in S_h\,.
\end{multline}
where \begin{equation}\label{eta}
\eta=\rho_2'-2\alpha \rho_1   -\alpha(1-\alpha)\I\rho\,.\end{equation}
Rearranging the terms,
\[
\begin{aligned}
(\I^{1-\alpha}\theta_2',\chi)+A(\theta_2,\chi)&=
(\I^{1-\alpha}\eta,\chi)+2\alpha (\I^{1-\alpha}\theta_1,\chi)   +\alpha(1-\alpha)(\I^{1-\alpha}(\I\theta),\chi)\,.
\end{aligned}
\]
Setting  $\chi=\theta_2'(s)\in S_h$,  integrating over the time interval $(0,t)$, and using  the continuity property of $\I^{1-\alpha}$ in \eqref{eq: continuity} (for an appropriate choice of $\epsilon$)  for each term on the right-hand side,  we reach
\begin{multline*}
\int_0^t [(\I^{1-\alpha}\theta_2',\theta_2')+A(\theta_2,\theta_2')]\,ds \\
\le
 \frac{1}{2}\int_0^t (\I^{1-\alpha}\theta_2',\theta_2')\,ds
 +C
\int_0^t [(\I^{1-\alpha}\eta,\eta)+(\I^{1-\alpha}\theta_1,\theta_1)+(\I^{2-\alpha}\theta,\I\theta)]ds\,.\,.
\end{multline*}
Integration by parts  follows by using the positivity assumption of $\kappa_\alpha$ in \eqref{eq: A positive}, gives
\[
\begin{aligned}
2\int_0^t A(\theta_2,\theta_2')\,ds&=\int_0^t \big( \kappa_\alpha ,  ((\nabla \theta_2)^2)'\big)ds\\
&=( \kappa_\alpha (t),  (\nabla \theta_2)^2(t))-\int_0^t \big( \kappa_\alpha ', (\nabla \theta_2)^2\big)ds
\\ &\ge  \kappa_{\min} \|\nabla \theta_2(t)\|^2-\int_0^t \big( \kappa_\alpha ', (\nabla \theta_2)^2\big)ds,
\end{aligned}
\]
and hence,
\begin{multline*}
\int_0^t (\I^{1-\alpha}\theta_2',\theta_2')\,ds +\|\nabla\theta_2(t)\|^2 \le C\int_0^t \|\nabla \theta_2\|^2ds+\frac{1}{2}\int_0^t (\I^{1-\alpha}\theta_2',\theta_2')\,ds\\
 +C
\int_0^t [(\I^{1-\alpha}\eta,\eta)+(\I^{1-\alpha}\theta_1,\theta_1)+(\I^{2-\alpha}\theta,\I\theta)]ds\,.
\end{multline*}

Simplifying, and then using  \eqref{eq: bound of I 2-alpha} and Lemma \ref{eq: bound of I 1-alpha Theta1}, we obtain
\[
\int_0^t (\I^{1-\alpha}\theta_2',\theta_2')\,ds + \|\nabla \theta_2(t)\|^2\le C
\int_0^t  \big(|(\I^{1-\alpha}\eta,\eta)|+|(\I^{1-\alpha}\rho_1,\rho_1)|+|(\I^{2-\alpha}\rho,\I\rho)|\big)\,ds +C\int_0^t \|\nabla \theta_2\|^2ds\,.\]
Therefore,  applications of the  inequality in \eqref{lmm} and  the continuous version of  Gronwalls inequality yield
\[t^{-\alpha}\|\theta_2(t)\|^2 + \|\nabla \theta_2(t)\|^2 \le C
\int_0^t  \big(|(\I^{1-\alpha}\eta,\eta)|+|(\I^{1-\alpha}\rho_1,\rho_1)|+|(\I^{2-\alpha}\rho,\I\rho)|\big)\,ds\,.\]
The desired result follows immediately after using the fact that   $\theta(t)=t^{-2}\theta_2(t)$, the definition of $\eta$ in \eqref{eta} and the Cauchy-Schwarz inequality.
\end{proof}

In the next theorem, we show that the $L^2(\Omega)$-norm error from the spatial discretization by the scheme \eqref{semi} is  bounded by $C h^2 t^{\alpha(\delta-2)/2}\|u_0\|_\delta$  for $0\le \delta\le 2$.
 \begin{theorem} \label{thm: smooth and nonsmoothv2}
 Let $u$ be the solution of the time fractional diffusion problem  $(\ref{a})$ and let $u_h$  be the finite element solution defined by  $(\ref{semi})$, with $u_{h0}=P_h u_0$. Assume that  $\kappa_\alpha\,,\kappa_\alpha' \in L^\infty((0,T);W^{1,\infty}(\Omega))$ and
 $\kappa_\alpha''\in L^\infty((0,T);L^\infty(\Omega))$. Then,  for $t \in (0,T]$,  we have
$$
 \|(u-u_h)(t)\|  \leq
 C h^2 t^{\alpha(\delta-2)/2}\|u_0\|_\delta\quad{\rm for}~~0\le \delta \le 2\,.
$$
 \end{theorem}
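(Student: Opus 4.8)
The plan is to feed the right-hand side of Lemma~\ref{H1-2} with the projection estimates already recorded in Section~\ref{sec:Semi-discrete FE} and then combine the resulting bound on $\theta$ with the estimate of $\rho$ through the splitting $u-u_h=\rho-\theta$. Concretely, I would bound each of the three integrands
\[
\|\I^{1-\alpha}\rho_2'\|\,\|\rho_2'\|,\qquad \|\I^{1-\alpha}\rho_1\|\,\|\rho_1\|,\qquad \|\I^{2-\alpha}\rho\|\,\|\I\rho\|
\]
separately, taking $m=2$ throughout so as to capture the full $h^2$ rate.

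The two factors in the first two products come directly from the displayed estimates: by \eqref{eq: bound of B2} one has $\|\I^{1-\alpha}\rho_2'(s)\|+\|\I^{1-\alpha}\rho_1(s)\|\le Ch^2 s^{2-\alpha+\alpha(\delta-2)/2}\|u_0\|_\delta$, while \eqref{rho-estimate} and \eqref{rho-estimate1} with $m=2$ give $\|\rho_2'(s)\|+\|\rho_1(s)\|\le Ch^2 s^{1+\alpha(\delta-2)/2}\|u_0\|_\delta$; multiplying, both products are controlled by $Ch^4 s^{3-3\alpha+\alpha\delta}\|u_0\|_\delta^2$. For the third product I need two bounds not yet displayed: inserting the pointwise estimate $\|\rho(s)\|\le Ch^2 s^{\alpha(\delta-2)/2}\|u_0\|_\delta$ under the integrals defining $\I\rho$ and $\I^{2-\alpha}\rho$, a Beta-integral computation yields $\|\I\rho(s)\|\le Ch^2 s^{1+\alpha(\delta-2)/2}\|u_0\|_\delta$ and $\|\I^{2-\alpha}\rho(s)\|\le Ch^2 s^{2-\alpha+\alpha(\delta-2)/2}\|u_0\|_\delta$, so the third product is again $\le Ch^4 s^{3-3\alpha+\alpha\delta}\|u_0\|_\delta^2$. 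Here I would verify that the convolution integrals converge, which holds because $\alpha(\delta-2)/2>-1$ for $0\le\delta\le2$ and $\alpha<1$.

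With all three integrands sharing the exponent $3-3\alpha+\alpha\delta$, integration over $(0,t)$ gives $\int_0^t[\cdots]\,ds\le Ch^4 t^{4-3\alpha+\alpha\delta}\|u_0\|_\delta^2$, the integral being finite since $3-3\alpha+\alpha\delta>-1$. Inserting this into Lemma~\ref{H1-2} and using the prefactor $t^{\alpha-4}$ collapses the time exponent exactly to $\alpha(\delta-2)$, so $\|\theta(t)\|^2\le Ch^4 t^{\alpha(\delta-2)}\|u_0\|_\delta^2$, that is $\|\theta(t)\|\le Ch^2 t^{\alpha(\delta-2)/2}\|u_0\|_\delta$. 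Combining this with $\|\rho(t)\|\le Ch^2 t^{\alpha(\delta-2)/2}\|u_0\|_\delta$ from \eqref{rho-estimate} (with $m=2$, valid for $0\le\delta\le2$) and the triangle inequality for $u-u_h=\rho-\theta$ yields the claimed bound on the whole range $0\le\delta\le2$.

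I expect the substantive content to reside already in Lemma~\ref{H1-2}: its weight $t^{\alpha-4}$, in place of the $t^{\alpha-2}$ of Lemma~\ref{H1}, is precisely what lets the integrands carry the two extra powers of $s$ that restore integrability near $s=0$ for rough data and thereby remove the restriction $\delta>3-2/\alpha$ of Theorem~\ref{thm: smooth and nonsmooth}. Granting that lemma, the only genuinely new work is the pair of convolution bounds for $\|\I\rho\|$ and $\|\I^{2-\alpha}\rho\|$, together with the bookkeeping that confirms all three integrands align to the exponent $3-3\alpha+\alpha\delta$; once that alignment is checked the estimate follows immediately. The main point to watch is that the common $\delta$-range $[0,2]$ is admissible in every invoked estimate, which it is since all the $m=2$ bounds for $\rho$ hold for $0\le\delta\le2$.
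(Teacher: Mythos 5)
Your proposal is correct and follows essentially the same route as the paper: feed the three integrands on the right-hand side of Lemma~\ref{H1-2} with the $m=2$ projection estimates \eqref{rho-estimate}, \eqref{rho-estimate1}, \eqref{eq: bound of B1}, \eqref{eq: bound of B2}, observe that all three products share the exponent $s^{3-3\alpha+\alpha\delta}$, integrate to get $C h^4 t^{4-3\alpha+\alpha\delta}\|u_0\|_\delta^2$, and let the prefactor $t^{\alpha-4}$ collapse this to $t^{\alpha(\delta-2)}$ before concluding via $u-u_h=\rho-\theta$. The only difference is cosmetic: you spell out the Beta-integral bounds for $\|\I\rho\|$ and $\|\I^{2-\alpha}\rho\|$ that the paper leaves implicit in its citation of \eqref{eq: bound of B2}, which is a sound detail-filling rather than a departure.
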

\begin{proof}
By using the estimate in \eqref{eq: bound of B2} and  the projection  error  bounds  in \eqref{rho-estimate}--\eqref{eq: bound of B1} (with  $m=2$) and \eqref{eq: bound of B2}, we find that for $t\in (0,T]$,
\begin{multline*}
\int_0^t\Big[\|\I^{1-\alpha}\rho_2'\|\,\|\rho_2'\|+  \|\I^{1-\alpha}\rho_1\|\,\|\rho_1\|+  \|\I^{2-\alpha}\rho\|\,\|\I\rho\|\Big]ds\\
\le C\,h^4 \int_0^t s^{2-\alpha+\alpha(\delta-2)/2} s^{1+\alpha(\delta-2)/2}\,ds\,\|u_0\|_\delta^2\\
\le  C\,h^4  t^{4-\alpha+\alpha(\delta-2)} \|u_0\|_\delta^2,\quad{\rm for}~~~0\le \delta\le 2\,.
\end{multline*}
 Inserting this bound in the achieved estimate in Lemma \ref{H1-2} yields
  \begin{equation}\label{eq: last bound of theta}
\|\theta(t)\| + t^{\alpha/2} \|\nabla \theta(t)\| \le C\,h^2  t^{\alpha(\delta-2)/2} \|u_0\|_\delta,\quad{\rm for}~~~0\le \delta\le 2\,.\end{equation}
On the other hand, from the error projection estimate of $\rho$  in \eqref{rho-estimate} for $m=2$,
  \begin{equation}\label{eq: last bound of rho}
\|\rho(t)\|+h\|\nabla \rho(t)\|\le Ch^2 t^{\alpha(\delta-2)/2}\|u_0\|_\delta,\quad {\rm for}~~0\le \delta\le 2.\end{equation}
Therefore, the desired  error bounds follow from the decomposition $u-u_h=\rho-\theta$, and the above estimates.
\end{proof}

The $H^1(\Omega)$-norm convergence  will be discussed next.    By using  \eqref{rho-estimate}, \eqref{rho-estimate1} and \eqref{eq: bound of B1} but with  $m=1$,
\begin{equation*}
\begin{aligned}
\int_0^t\Big[\|\I^{1-\alpha}\rho_2'\|\,\|\rho_2'\|+  \|\I^{1-\alpha}\rho_1\|\,\|\rho_1\|+&  \|\I^{2-\alpha}\rho\|\,\|\I\rho\|\Big]ds\\
&\leq C\,h^2 \|u_0\|_\delta^2 \int_0^t s^{3-\alpha+\alpha(\delta-1)}\,ds\\
&\le  C\,h^2 t^{4-2\alpha+\alpha\delta}\|u_0\|_\delta^2,\quad {\rm for}~~ 0\le \delta\le  1\,.
\end{aligned}
\end{equation*}
Hence, by Lemma \ref{H1-2},
\[\|\nabla\theta(t)\|^2 \le C  \,h^2 t^{\alpha(\delta-2)}\|u_0\|_\delta^2,\quad {\rm for}~~ 0\le \delta\le 1.\]
Therefore, from the decomposition $u-u_h=\rho-\theta$, the above estimate, and \eqref{rho-estimate} with $m=1$, we reach the following optimal $H^1(\Omega)$-norm error bound:
\begin{equation}\label{eq: H1 bound 0 delta 1}
 \|\nabla(u-u_h)(t)\|
 \le C \,h\, t^{\alpha(\delta-2)/2}\|u_0\|_\delta,\quad {\rm for}~~ 0\le \delta\le 1\,.
\end{equation}
However, for $u_0 \in \dot H^\delta(\Omega)$ with $1< \delta\le 2,$ once again, from the decomposition $u-u_h=\rho-\theta$ and the estimates in \eqref{eq: last bound of theta} and \eqref{eq: last bound of rho},  we find that
$$
 \|\nabla(u-u_h)(t)\|
 \le C \,h\, t^{\alpha(\delta-2)/2}
\max\{h\, t^{-\alpha/2},1\}\|u_0\|_\delta,\quad {\rm for}~~ 1< \delta\le 2\,.
$$
 This  error bound is  optimal provided that   $h^2 \le t^\alpha$. Indeed, by assuming that the spatial  mesh is quasi-uniform, this optimality can also be preserved even if  $h^2 > t^\alpha$. To see this, we apply the inverse inequality and use the achieved estimate in  \eqref{eq: last bound of theta},
 \begin{equation}\label{eq: bound of theta adv-1}
\|\nabla \theta(t)\|\le  Ch^{-1}\|\theta(t)\|\le  C\,h^2 \, t^{\alpha(\delta-2)/2} \|u_0\|_\delta,\quad  {\rm for}~ 1<\delta\le 2\,.
\end{equation}
Hence, for $t\in (0,T]$, we have
\begin{equation}\label{eq: H1 bound 1 delta 2}
 \|\nabla(u-u_h)(t)\|
 \le C \,h\, t^{\alpha(\delta-2)/2}\|u_0\|_\delta,\quad {\rm for}~~ 1< \delta\le 2\,.
\end{equation}


\section{Numerical results}\label{sec: numerical}
The aim of this section is to validate the achieved theoretical results numerically for the case of time-space dependent variable coefficient $\kappa_\alpha$ and nonsmooth initial data $u_0.$  For smooth $u_0$, some numerical results were carried out in \cite{MustaphaAbdallahFuratiNour2016}. Furthermore, for time independent $\kappa_\alpha$, extensive numerical tests were carried out in \cite{JLZ2013}, where
the empirical convergence rates in all numerical experiments confirm the theoretical findings for both smooth and nonsmooth initial data.

To compute the finite element solution, time discretization via  a piecewise linear discontinuous Galerkin method will be considered \cite{MustaphaAbdallahFuratiNour2016}. For time levels
$0=t_0<t_1<t_2<\cdots<t_N=T$, we denote the $n$th step size by~$\tau_n=t_n-t_{n-1}$ and the associated subinterval by~$I_n=(t_{n-1},t_n)$, for $1\le n\le N$.  The maximum time step size is denoted by $\tau$. Let
\[
 \W=\{w \in L^2((0,T), S_h):~~w|_{I_{n}}\in
\p_1(S_h)~{\rm for}~1\le n\le N\}, \]  where $\p_1(S_h)$ denotes the space
of linear polynomials in the time variable $t$,  with coefficients in~$S_h$. Using the elementary identity $\Bac \varphi(t)=
^{R}{\rm D}^{\alpha} \varphi(t) - \omega_{1-\alpha}(t)\varphi(0)$ ($^{R}{\rm D}^{1-\alpha}$ is the Riemann--Liouville fractional derivative), the finite element scheme \eqref{semi} can be rewritten as: for $0<t\le T,$
\[
(^{R}{\rm D}^{\alpha} u_h,\chi)+ A(u_h,\chi)=  \omega_{1-\alpha}(t)(u_{h0},\chi)\quad
\forall \chi\in S_h\,.
\]
We approximate~$u_h(t_n)$ by $U^n:=U(t_n^-)$ where $U \in \p_1(S_h)$
satisfying
\[
\int_{I_n}\bigl[({\rm ^R D}^\alpha  U,X)
    +A\bigl( U, X\bigr)\bigr]\,dt
    =\int_{I_n}(\omega_{1-\alpha}(t)u_{h0},X)\,dt\quad
\forall X\in \p_1(S_h),~{\rm with}~t\in I_n,
\]
for $1\le n\le N.$  For a smooth  initial data $u_0$, by using  a graded mesh of the form $t_n=(n/N)^\gamma T$ where the exponent $\gamma\ge 1$ chosen appropriately (depends on the regularity of the continuous solution), the numerical results in  \cite{MustaphaAbdallahFuratiNour2016} showed that the above numerical scheme  is second-order accurate in both time and space,   However the theoretical results there  were slightly pessimistic, where an $O(h^2+\tau^{2-\frac{\alpha}{2}})$ error bound  was achieved.


In our test example, $\kappa_\alpha(x,t) =2+\sin(2\pi x)+t^{2+\alpha}$,
$T=1$~and $\Omega = (0,1)$,  and discontinuous initial data given by $u_0(x) =1$ for $x\in[1/4,3/4]$ and $u_0(x)=0$ elsewhere.
Since  $u_0 \in \dot H^\delta(\Omega)$ for  $0\le \delta <1/2,$
by applying Theorem~\ref{thm: smooth and nonsmoothv2} with
$\delta =\tfrac12-\epsilon$~and $\epsilon^{-1}=\log(e^2+t^{-1})$ (so that
$t^{-\epsilon}\le e$~and $0<\epsilon<1/2$), gives
\begin{equation}\label{eq: error ex1}
\|u(t)-u_h(t)\|\le Ct^{-3\alpha/4}h^2\sqrt{\log(e^2+t^{-1})}
	\quad\text{for $0<t\le1$.}
\end{equation}

In our computations,  a uniform spatial mesh with~$h=1/M$ was employed.
In all cases, $M$ was divisible by~$4$ so that the
points~$1/4$~and $3/4$ (where $u_0$ is discontinuous)
coincided with two of the nodes. We first computed a reference
solution~$\Uref^n=U^n$ using a fine spatial mesh with $M=1024$ and a fine time graded mesh of the form $t_n=(n/N)^2$ with $N=5,000.$ We then computed $U^n$ for~$M\in\{16,32,64,128,256\}$, again
with~$N=5,000$.  The initial data was chosen as~$u_{0h}=P_hu_0$ in
each case.   With such a small~$\tau$, for $1\le n\le N$,  the $L^2(\Omega)$-norm error $E_{h,0}^n:=\|U^n-\Uref^n\|$  and the $H^1(\Omega)$-norm error
 $E_{h,1}^n:=\|U^n-\Uref^n\|_1$, were  dominated by the influence of the spatial discretisation. We sought to estimate the $L^2$ convergence rates $\sigma_{h,0}$  and $H^1$ convergence rates $\sigma_{h,1}$ from the relation
$\sigma_{h,\ell}=\log_2(E^*_{2h,\ell}/E^*_{h,\ell}),$
 where the weighted error
\begin{equation}\label{eq: E*}
E^*_{h,\ell}=\max_{1\leq n\leq N}
	\frac{t_n^{3\alpha/4}E_{h,\ell}^n}{\sqrt{\log(e^2+t_n^{-1})}}\quad{\rm for}~~\ell=0,1,
\end{equation}
For three different values of~$\alpha$, Table~\ref{table: errors} and Table \ref{table: errors Ih} show the values of $E^*_{h,0}$~and
$\sigma_{h,0}$, and of $E^*_{h,1}$~and
$\sigma_{h,1}$, respectively. As expected from Theorem~\ref{thm: smooth and nonsmoothv2} and the estimate in \eqref{eq: H1 bound 0 delta 1}, the computed
values of~$\sigma_{h,0}$ and $\sigma_{h,1}$ are close to~$2$ and $1$, respectively. Furthermore,   Figure~\ref{fig: error} shows how the
$L^2$ error $E_{h,0}^n$ (solid lines) and $H^1$ error $E_{h,1}^n$ (dashed lines) vary with~$t_n$ for different mesh size $h$.  Due to
the log-log scale, the graph of a function proportional
to~$t^{-3\alpha/4}$ appears as a straight line with
gradient~$-3\alpha/4$, indicated by the small triangle, and we observe
exactly this behavior of the error for~$t$ (relatively) close to  zero.

\begin{table}
\caption{Weighted $L^2(\Omega)$-norm errors and
convergence rates.}
\label{table: errors}
\begin{center}
\renewcommand{\arraystretch}{1.2}
\begin{tabular}{c|cc|cc|cc}
$M$&
\multicolumn{2}{c|}{$\alpha=0.3$}&
\multicolumn{2}{c|}{$\alpha=0.5$}&
\multicolumn{2}{c}{$\alpha=0.8$}\\
\hline
  16& 1.9861e-04&        & 1.7497e-04&         & 1.5665e-04&         \\
  32& 4.9618e-05&  2.0010& 4.3705e-05&   2.0012& 3.9116e-05&   2.0017\\
  64& 1.2376e-05&  2.0034& 1.0900e-05&   2.0035& 9.7531e-06&   2.0038\\
 128& 3.0654e-06&  2.0134& 2.6995e-06&   2.0136& 2.4135e-06&   2.0147\\
 256& 7.3777e-07&  2.0548& 6.4937e-07&   2.0556& 5.7866e-07&   2.0603
 \end{tabular}
\end{center}
\end{table}

\begin{figure}
\begin{center}
\includegraphics[width=13.0cm,height=7cm]{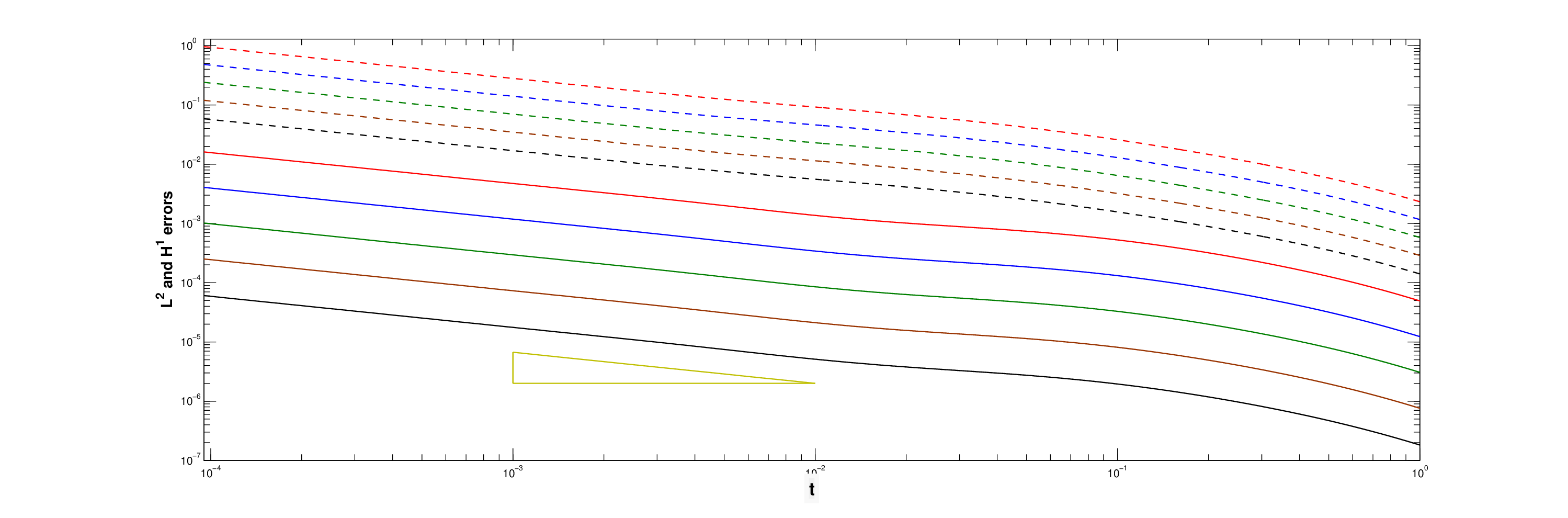}
\caption{Plots of the errors $E^n_{h,0}$ (solid lines) and $E^n_{h,1}$ (dashed lines) as a function of~$t_n$,
for~$\alpha=0.7$ with $M=16,32,64,128,$ and $256$ (in order from top to bottom).
The triangle indicates the gradient~$-3\alpha/4$ for a function
proportional to~$t^{-3\alpha/4}$; cf.~\eqref{eq: error ex1}. } \label{fig: error}
\end{center}
\end{figure}

\begin{table}
\caption{Weighted $H^1(\Omega)$-norm errors and
convergence rates.}
\label{table: errors Ih}
\begin{center}
\renewcommand{\arraystretch}{1.2}
\begin{tabular}{c|cc|cc|cc}
$M$&
\multicolumn{2}{c|}{$\alpha=0.3$}&
\multicolumn{2}{c|}{$\alpha=0.50$}&
\multicolumn{2}{c}{$\alpha=0.8$}\\
\hline
  16& 1.1876e-02&         & 1.0309e-02&         & 8.2688e-03&          \\
  32& 5.9376e-03&   1.0001& 5.1529e-03&   1.0004& 4.1321e-03&   1.0008\\
  64& 2.9647e-03&   1.0020& 2.5727e-03&   1.0021& 2.0629e-03&   1.0022\\
 128& 1.4736e-03&   1.0085& 1.2788e-03&   1.0085& 1.0253e-03&   1.0086\\
 256& 7.1905e-04&   1.0352& 6.2398e-04&   1.0352& 5.0032e-04&   1.0352
  \end{tabular}
\end{center}
\end{table}



\begin{thebibliography}{99}





\bibitem{Alikhanov2015} A. Alikhanov,  A new difference scheme for the time fractional diffusion equation, {\em J. Comput. Phys.}, {\bf 280} (2013), 424--438

\bibitem{BrunnerYamamoto2010}  H. Brunner, L. Ling and M.  Yamamoto, Numerical simulations of 2D fractional
 subdiffusion problems, {\em J. Comput. Phys.}, {\bf 229} (2010), 6613–-6622

\bibitem{ChenLiuLiuChenAnhTurnerBurrage2014} J. Chen, F. Liu, Q. Liu, X. Chen, V. Anh, I. Turner and K. Burrage, Numerical simulation for the three-dimension fractional sub-diffusion equation, {\em Appl. Math. Model.}, {\bf 38} (2014), 3695-–3705

\bibitem{Cui2012} M. Cui, Compact alternating direction implicit method for two-dimensional time fractional diffusion equation,
{\em J. Comput. Phys.},  {\bf 231}  (2012), 2621--2633



\bibitem{JLZ2013}  B. Jin, R. Lazarov and Z. Zhou, Error estimates for a semidiscrete finite element method for fractional order parabolic equations,
{\em SIAM J. Numer. Anal.},  {\bf 51}  (2013),  445-–466


\bibitem{JLPZ2015}  B. Jin, R. Lazarov, J. Pascal and Z. Zhou, Error analysis of semidiscrete finite element methods for inhomogeneous time-fractional diffusion, {\em IMA J. Numer. Anal.},  {\bf 35}  (2015),  561–-582

\bibitem{KaraaMustaphaPani2017}   S. Karaa, K. Mustapha and A. K. Pani, Optimal error analysis of a FEM for fractional diffusion problems by energy arguments, {\em J. Sci. Comput.}, DOI 10.1007/s10915-017-0450-7, 2017

\bibitem{LeMcLeanMustapha2016} K. N. Le, W. McLean and K. Mustapha, Numerical solution of the time-fractional Fokker-Planck equation with general forcing, {\em SIAM J. Numer. Anal.}, {\bf 54}, 1763--1784

    \bibitem{LiXu2009} X. Li and C.  Xu,  A Space-Time spectral method for the time fractional diffusion equation, {\em SIAM J. Numer.
Anal.}, {\bf 47} (2009),  2108–-2131

\bibitem{LinXu2007} Y. Lin, and  C. Xu,  Finite difference/spectral approximations for the time-fractional diffusion
equation, {\em J. Comput. Phys.}, {\bf 225} (2007), 1533–-1552

\bibitem{LuskinRannacher1982}
M. Luskin and R.   Rannacher,  On the smoothing property of the Galerkin method for parabolic equations, {\em SIAM J. Numer. Anal.},  {\bf 19} (1982), 93--113

\bibitem{Mclean2010} W. Mclean, Regularity of solutions to a time-fractional diffusion equation,
{\em ANZIAM J.},  {\bf 52}  (2010), 123--138


\bibitem{MurilloYuste2013} J. Murillo and  S. B. Yuste, A finite difference method with non-uniform timesteps for fractional
diffusion and diffusion-wave equations, {\em Eur. Phys. J. Special Topics}, {\bf 222} (2013), 1987–-1998

\bibitem{MustaphaAbdallahFurati2014} K. Mustapha, B. Abdallah and  K. M.  Furati,  A discontinuous Pertov-Galerkin method for time-fractional
diffusion equations, {\em SIAM J. Number. Anal.}, {\bf 52} (2014), 2512--2529


\bibitem{MustaphaSchoetzau2014} K. Mustapha and  D. Sch\"otzau, Well-posedness of $hp-$version discontinuous Galerkin methods for fractional diffusion wave equations, {\em  IMA J. Numer. Anal.}, {\bf 34} (2014), 1226--1246


    \bibitem{MustaphaNourCockburn2016} K. Mustapha, M. Nour and B. Cockburn,  Convergence and superconvergence analyses of HDG  methods  for time fractional diffusion problems, {\em Adv. Comput. Math.}, {\bf 42} (2016), 377--393


    \bibitem{MustaphaAbdallahFuratiNour2016}  K. Mustapha, B. Abdallah,  K. M. Furati and M. Nour, A discontinuous Galerkin method for time fractional
diffusion equations with variable coefficients,     {\em Numer. Algo.}, {\bf 73}  (2016), 517--534




\bibitem{Osler1970} T. J. Osler, Leibniz Rule for Fractional Derivatives Generalized and an Application to Infinite Series, {\em SIAM J. Appl. Math.}, {\bf 18} (1970), 658--674.

\bibitem{StynesORiordanGracia2017} M. Stynes, E. O\'Riordan, and J. L. Gracia, Error analysis of a finite difference method on graded meshes for a time-fractional diffusion equation, {\em SIAM J. Numer. Anal.}, {\bf 55} (2017), 1057--1079

         \bibitem{XuZheng2013} Q. Xu and  Z. Zheng, Discontinuous Galerkin method for time fractional
diffusion equation, {\em J. Informat. Comput. Sci.}, {\bf 10} (2013), 3253–-3264

\bibitem{ZengLiLiuTurner2013}
F. Zeng, C. Li, F. Liu and I. Turner, The use of finite difference/element Aapproaches for solving the time-fractional subdiffusion equation, {\em SIAM J. Sci. Comput.}, {\bf 35} (2013),  A2976--A3000

    \bibitem{ZhangLiao2014} Y. Zhang, Z. Sun and  H.~l. Liao,   Finite difference methods for the time fractional diffusion equation
on non-uniform meshes, {\em J. Comput. Phys.}, {\bf 265} (2014), 195--210



    \bibitem{ZhaoSun2011}  X. Zhao, X. and  Z.-Z. Sun, A box-type scheme for fractional sub-diffusion equation with Neumann boundary
conditions, {\em J. Comput. Phys.}, {\bf 230} (2011), 6061--6074

 \bibitem{ZhaoXu2014} X. Zhao and Q. Xu,  Efficient numerical schemes for fractional sub-diffusion equation with the spatially variable coefficient, {\em Appl. Math. Model.}, {\bf 38} (2014), 3848--3859

\bibitem{Zhaoetal2016} Y. Zhao, Y. Zhang, D. Shi, F. Liu and I.  Turner, Superconvergence analysis of nonconforming finite element
method for two-dimensional time fractional diffusion equations, {\em Appl. Math. Lett.,} {\bf 59} (2016), 38--47

\end{thebibliography}
\end{document}